\newcommand{\N}{\mathbb{N}}
\newcommand{\Z}{\mathbb{Z}}
\newcommand{\R}{\mathbb{R}}
\newcommand{\eps}{\varepsilon}
\newcommand{\bd}{\partial}
\newcommand{\supp}{\operatorname{supp}}
\newcommand{\vol}{\operatorname{Vol}}
\newcommand{\area}{\operatorname{Area}}
\newcommand{\hv}{\mathfrak h}
\newcommand{\vc}{\operatorname{VC}}
\newcommand{\B}{\mathcal B}
\newcommand{\C}{\mathcal C}
\newcommand{\vz}{\operatorname{VZ}}
\newcommand{\M}{\mathbf M}
\theoremstyle{plain}
\newtheorem{theorem}{Theorem}[section]
\newtheorem{prop}[theorem]{Proposition}
\newtheorem{conj}[theorem]{Conjecture}
\theoremstyle{definition}
\newtheorem{defn}[theorem]{Definition}
\theoremstyle{remark}
\newtheorem{rem}[theorem]{Remark}
\begin{document}

\title[An Alternative for CMCs]{An alternative for constant mean curvature hypersurfaces}
\author{Liam Mazurowski}
\author{Xin Zhou}
\address{Lehigh University, Department of Mathematics, Bethlehem, Pennsylvania 18015}
\email{lim624@lehigh.edu}
\address{Cornell University, Department of Mathematics, Ithaca, New York 14850}
\email{xinzhou@cornell.edu}

\begin{abstract}
Let $M^{n+1}$ be a closed manifold of dimension $3\le n+1\le 7$ equipped with a generic Riemannian metric $g$. Let $c$ be a positive number.  We show that, either there exist infinitely many distinct closed hypersurfaces with constant mean curvature equal to $c$, or there exist infinitely many distinct closed hypersurfaces with constant mean curvature less than $c$ but enclosing half the volume of $M$.
\end{abstract}

\maketitle

%\setcounter{section}{-1}

%%%%%%%%%%%%%%%%%%%%%%%%%%%%%%%%%%%%%%%%%%%%%%%%%%%
\section{Introduction}
\label{section:introduction}

Let $(M,g)$ be a closed Riemmanian manifold. Let $\mathcal M$ denote the moduli space of all smooth, closed, almost-embedded constant mean curvature hypersurfaces in $M$. Note that we do not fix the topology of elements in $\mathcal M$, and that the value of the mean curvature is allowed to vary between different hypersurfaces in $\mathcal M$. In this short note, we contribute to the study of $\mathcal M$ by proving the following alternative for constant mean curvature hypersurfaces. 

\begin{theorem}
\label{theorem:dichotomy}
Let $M^{n+1}$ be a closed manifold of dimension $3\le n+1\le 7$. Let $g$ be a generic Riemannian metric on $M$. Then, for every constant $c > 0$, at least one of the following two statements is true: 
\begin{itemize}
\item[(i)] There exist infinitely many distinct smooth, closed, almost-embedded hypersurfaces in $M$ with constant mean curvature equal to $c$;
\item[(ii)] There exist infinitely many distinct smooth, closed, almost-embedded hypersurfaces in $M$ with constant mean curvature less than $c$ but enclosing half the volume of $M$. 
\end{itemize}
\end{theorem}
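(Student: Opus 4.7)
The plan is to apply a CMC min-max theory based on the functional $A^c(\Omega) := \h^n(\bd^*\Omega) - c\,\vol(\Omega)$ on the space $\C(M)$ of Caccioppoli sets of $M$. The nontrivial critical points of $A^c$ are precisely the almost-embedded hypersurfaces of constant mean curvature $c$ (with the appropriate choice of inward normal). Building on the framework of Zhou--Zhu and subsequent refinements, for each $p \ge 1$ one has a $p$-dimensional min-max value $\omega_p^c(M,g)$ whose realization $\Sigma_p$ is, for generic $g$, a smooth, multiplicity-one, almost-embedded critical hypersurface for $A^c$.

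The first step is a Weyl-type asymptotic $\omega_p^c \to \infty$ as $p \to \infty$, in the spirit of the Liokumovich--Marques--Neves law. Combined with the elementary bound $\h^n(\Sigma_p) \le \omega_p^c + c\,\vol(M)$, this forces $\h^n(\Sigma_p) \to \infty$. If infinitely many $\Sigma_p$ are genuine CMC-$c$ critical points of $A^c$ and are pairwise distinct, conclusion (i) is immediate. Thus the substantive case is when only finitely many almost-embedded CMC-$c$ hypersurfaces exist in $(M,g)$; then, since $\h^n(\Sigma_p) \to \infty$, infinitely many of the $\Sigma_p$ cannot be CMC-$c$ critical points, and their min-max values must be realized by critical configurations of a different variational type.

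The key claim is that any such alternative critical configuration must be a CMC hypersurface with mean curvature strictly less than $c$ enclosing exactly half the volume of $M$. Heuristically, one observes that $A^c(\Omega) - A^c(M \setminus \Omega) = c\bigl(\vol(M) - 2\vol(\Omega)\bigr)$, so the $\mathbb{Z}/2$ action $\Omega \mapsto M \setminus \Omega$ breaks the symmetry of $A^c$ except at half-volume. A pull-tight deformation along a $p$-sweepout, once the CMC-$c$ critical values have been exhausted by the finite collection of genuine CMC-$c$ hypersurfaces, is obstructed only at a fixed point of this $\mathbb Z/2$ action, producing a critical varifold whose enclosed region satisfies $\vol(\Omega_p) = \vol(M)/2$. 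At such a configuration $\bd^*\Omega_p$ is CMC with mean curvature determined by a Lagrange multiplier; an area comparison together with the genericity of $g$ rules out $|H_p| = c$, forcing $|H_p| < c$. Extracting a subsequence of pairwise distinct such hypersurfaces, using again the finiteness of CMC-$c$ hypersurfaces and the unboundedness of $\h^n(\Sigma_p)$, yields (ii).

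The main obstacle will be the rigorous justification of the half-volume alternative. This amounts to adapting the Zhou--Zhu pull-tight argument to the $\mathbb Z/2$-symmetric sweepout class and showing that, in the absence of CMC-$c$ critical points of a given area scale, the min-max value is realized precisely at a half-volume critical point. Additional technical steps include controlling multiplicity via the CMC multiplicity-one theorem, ensuring that the produced hypersurfaces are geometrically distinct rather than coincident (not differing only by reorientation), and showing that the Lagrange multipliers $|H_p|$ stay strictly below $c$ rather than crossing it for some subsequence.
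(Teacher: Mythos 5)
There is a genuine gap, and it sits exactly where you flagged it: the ``half-volume alternative.'' Every critical point of your functional $A^c(\Omega)=\area(\bd\Omega)-c\,\vol(\Omega)$ is a CMC-$c$ hypersurface --- the coefficient $c$ of $\vol(\Omega)$ is hard-wired, so there is no free Lagrange multiplier that could drop below $c$. The claimed mechanism by which a pull-tight, ``once the CMC-$c$ critical values have been exhausted,'' gets obstructed at a fixed point of the $\Z_2$ action and produces a half-volume CMC with $|H|<c$ is asserted rather than derived, and no such phenomenon exists in the Zhou--Zhu framework: if the min-max for $A^c$ were available in the form you describe, with each $\omega_p^c$ realized by a smooth multiplicity-one critical point of $A^c$, you would obtain conclusion (i) unconditionally, which is an open problem. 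Moreover, the values $\omega_p^c$ are not well defined as you use them: $p$-sweepouts detect the class $\lambda^p$ in $H^*(\B(M,\Z_2),\Z_2)$, and $A^c$ does not descend to $\B(M,\Z_2)$ since $A^c(\Omega)\neq A^c(M\setminus\Omega)$; on the double cover $\C(M)$ there is no cohomology for a $p$-sweepout to detect, so one must pass to relative homotopy classes, and controlling the functional on the boundary of the parameter space is precisely the step that fails for general $c$ (this is why Dey's argument is restricted to small $c$).

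The paper's resolution is to change the functional rather than the homotopy-theoretic setup: it minimaxes $E_0(T)=\M(T)-c\,\big\vert\vol(\Omega)-\tfrac12\vol(M)\big\vert$, which \emph{is} invariant under $\Omega\mapsto M\setminus\Omega$ and hence descends to $\B(M,\Z_2)$, giving values $\alpha_p$ with $\omega_p-c\hv\le\alpha_p\le\omega_p\to\infty$ and therefore infinitely many gaps $\alpha_p<\alpha_{p+1}$. The dichotomy then comes from the kink of the absolute value at half volume, not from a $\Z_2$ fixed-point obstruction: after regularizing with $f_\eps(t)=\sqrt{t^2+\eps^2}$ and a small Morse-function term, applying the authors' min-max theory for $\area(\bd\Omega)+f(\vol(\Omega))$ on a Dey-type relative homotopy class built from the suspension of the double cover of a $p$-sweepout, and letting $\delta\to0$ and $\eps\to0$, the limiting hypersurface has $|H|=c\,|f_\eps'(\vol(\Omega)-\hv)|$; this forces $|H|=c$ when the limiting volume is not $\hv$, and only $|H|\le c$ when it equals $\hv$, while genericity excludes the minimal ($H=0$) half-volume case. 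Your proposal would need this symmetrized functional, or an equivalent device, to make the half-volume alternative an actual theorem rather than a heuristic.
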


The proof of Theorem \ref{theorem:dichotomy} is based on the authors' recently developed min-max theory \cite{mazurowski2024infinitely} for finding critical points of functionals of the form $\Omega \mapsto \area(\bd \Omega) + f(\vol(\Omega))$. Here $\Omega \subset M$ is an open set with sufficiently nice boundary, and $f$ is an arbitrary smooth function. 

\subsection{Background and Motivation} In this subsection, we briefly survey some of the known results and open problems concerning the space $\mathcal M$. It is known that for a generic metric $g$ on $M$, the moduli space $\mathcal M$ of constant mean curvature hypersurfaces has the structure of a 1-dimensional manifold; see for example \cite{pacard2005foliations}. This manifold may have both compact and non-compact components. 

The finer structure of $\mathcal M$ is best understood in the very small mean curvature and very large mean curvature regimes. Regarding surfaces with large mean curvature, Ye \cite{ye1991foliation} proved that there exist foliations by CMC spheres in small neighborhoods of non-degenerate critical points of the scalar curvature. If the scalar curvature function $R$ is Morse, this implies that $\mathcal M$ contains at least $\mu(M)$ spheres with constant mean curvature $c$ when $c$ is large enough. Here $\mu(M)$ is the Morse number of $M$, i.e., the minimum number of critical points of a Morse function on $M$.  Later, Pacard and Xu \cite{pacard2009constant} proved that even when $R$ is not Morse, $M$ still contains at least $\operatorname{cat}(M)$ spheres with constant mean curvature $c$ when $c$ is large enough. Here $\operatorname{cat}(M)$ is the Lusternik-Schnirelmann category of $M$. 

In general, these CMC spheres do not exhaust the high mean curvature portion of $\mathcal M$. For example, Mazzeo and Pacard \cite{pacard2005foliations} have constructed CMCs with large mean curvature by perturbing small tubes around geodesics. We still do not have a complete understanding of the space of all CMC hypersurfaces with large mean curvature. In fact, even if one restricts the topology, there is not yet a complete description of the space of CMC spheres with large mean curvature. For instance, Pacard claims in joint work with Malchiodi \cite{pacard2005constant} to be able to construct CMCs which look like a connect sum of two small spheres joined by a Delaunay tube.

In the small mean curvature regime, constant mean curvature surfaces can be found by perturbing minimal surfaces. CMCs with small mean curvature can also be constructed by doubling minimal surfaces; see \cite{Mazurowski22} for a min-max construction of such doublings. Dey \cite{Dey23} proved that the number of CMC hypersurfaces with mean curvature $c$ goes to infinity at a quantitative rate as $c\to 0$. Again a complete description of the space of CMC hypersurfaces with small mean curvature still seems out of reach. 

For general values of $c > 0$, the second named author and J. Zhu \cite{zhou2019min} proved that $M$ always contains at least one almost-embedded hypersurface with constant mean curvature $c$. It is conjectured that there should always be a second almost-embedded hypersurfaces with constant mean curvature $c$. 

\begin{conj}[Twin Bubble Conjecture \cite{Zhou-ICM22}]
    Let $(M^{n+1},g)$ be a closed Riemannian manifold of dimension $3\le n+1 \le 7$. Then, for every constant $c > 0$, there are at least two distinct closed, almost-embedded hypersurfaces with constant mean curvature $c$ in $M$. 
\end{conj}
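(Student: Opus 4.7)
The plan is to produce two distinct almost-embedded constant mean curvature hypersurfaces as critical points of the penalized area functional
\[\mathcal{A}_c(\Omega) := \area(\bd \Omega) - c\, \vol(\Omega),\]
defined on Caccioppoli sets $\Omega \subset M$, whose smooth critical points correspond to almost-embedded hypersurfaces with constant mean curvature $c$ (with the mean curvature vector pointing into $\Omega$). The strategy is to produce two min-max critical values for $\mathcal{A}_c$ using sweepouts of different topological complexity and then argue that the corresponding hypersurfaces must be distinct. One observation that makes this approach natural: a single unoriented CMC hypersurface $\Sigma$ with $c > 0$ corresponds to exactly one critical point of $\mathcal{A}_c$, since the complementary bubble $M \setminus \Omega$ has mean curvature $-c$ and is therefore not critical for $\mathcal{A}_c$. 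Hence distinctness of critical points is equivalent to distinctness of the underlying hypersurfaces.

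First I would invoke the Zhou-Zhu existence theorem \cite{zhou2019min} to produce a first CMC $\Sigma_1 = \bd \Omega_1$ realized as a one-parameter min-max critical point of $\mathcal{A}_c$ with width $\omega_1(c) > 0$. Then I would apply the multi-parameter min-max machinery of \cite{mazurowski2024infinitely} to $\mathcal{A}_c$ using sweepouts parameterized by a space with nontrivial cup-length two, producing a second critical value $\omega_2(c) \ge \omega_1(c)$ and a corresponding CMC $\Sigma_2 = \bd \Omega_2$. If the strict inequality $\omega_1(c) < \omega_2(c)$ holds, then $\mathcal{A}_c(\Omega_1) \ne \mathcal{A}_c(\Omega_2)$ forces $\Omega_1 \ne \Omega_2$ and hence $\Sigma_1 \ne \Sigma_2$. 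If instead $\omega_1(c) = \omega_2(c)$, then a standard Lusternik-Schnirelmann argument applied to the cup-length-two sweepout, combined with the genericity of $g$ (which yields non-degeneracy of the critical set via the structure results for $\mathcal M$ mentioned in the introduction), produces two distinct critical points at the common level.

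The main obstacle is ruling out the degeneration scenario of Theorem \ref{theorem:dichotomy}. That very theorem permits the min-max sequence for the second critical value to converge to a CMC with mean curvature strictly less than $c$ enclosing half the volume of $M$, rather than to a genuine CMC of mean curvature $c$. To prove the Twin Bubble Conjecture one must exclude this half-volume, lower-curvature alternative at the second min-max width. This would require either a quantitative separation between the one-parameter and two-parameter widths forcing $\omega_2(c)$ to lie above any threshold at which curvature-drop can occur, a new a priori lower bound on $\omega_2(c)$ expressed in terms of $c$ and the geometry of $M$, or a refined bubbling analysis along min-max sequences guaranteeing that the limiting varifold retains mean curvature exactly $c$. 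This is the genuinely new analytic step that would be required to upgrade the dichotomy of Theorem \ref{theorem:dichotomy} into a proof of the existence of a second hypersurface with mean curvature precisely $c$.
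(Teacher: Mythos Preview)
The statement is presented in the paper as an open conjecture; the paper offers no proof of it, and Theorem~\ref{theorem:dichotomy} is explicitly framed as partial progress in its direction. There is therefore no proof in the paper to compare against, and you yourself concede in your final paragraph that a ``genuinely new analytic step'' is still missing.

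Beyond that concession there is a structural tension in the outline that you do not flag. The functional $\mathcal{A}_c(\Omega)=\area(\bd\Omega)-c\,\vol(\Omega)$ is not invariant under $\Omega\mapsto M\setminus\Omega$ and hence does not descend to $\B(M,\Z_2)$, whereas the cup-length machinery you invoke for the second width and for the Lusternik--Schnirelmann step lives on $\B(M,\Z_2)$ (whose cohomology ring is $\Z_2[\lambda]$); the space $\C(M)$ itself is contractible. This is exactly why the paper is forced to replace $\mathcal{A}_c$ by the symmetrized $E_0(T)=\M(T)-c\,\lvert\vol(\Omega)-\hv\rvert$: symmetrization is the price of access to $p$-sweepouts, and the half-volume alternative is the cost, since the derivative of $t\mapsto c\lvert t-\hv\rvert$ has modulus strictly less than $c$ at $t=\hv$. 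In particular, the half-volume degeneration you name as ``the main obstacle'' does \emph{not} arise for $\mathcal{A}_c$ --- smooth critical points of $\mathcal{A}_c$ automatically have mean curvature exactly $c$ --- it arises only after symmetrization. So your diagnosis is attached to the wrong functional. If you insist on staying with $\mathcal{A}_c$ via relative homotopy classes in $\C(M)$, the genuine obstacles are elsewhere: first, producing a second relative class whose $\mathcal{A}_c$-width is provably at least $\omega_1(c)$ without passing through $\B(M,\Z_2)$; and second, the equality case, where your appeal to Lusternik--Schnirelmann plus generic non-degeneracy is not justified --- isolatedness of critical points does not by itself prevent a single $c$-CMC from realizing two distinct min-max widths, and ruling that out would require a Morse-index bound for min-max $c$-CMCs that is not currently available.
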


This conjecture is motivated in part by a famous conjecture of Arnold on the existence of closed magnetic geodesics in Riemannian 2-spheres \cite{arnold2004arnold}. Arnold's conjecture states that every Riemannian 2-sphere $(S^2,g)$ should contain at least two closed curves with constant geodesic curvature $\kappa$ for every constant $\kappa > 0$. Schneider \cite{schneider2011closed} has proved Arnold's conjecture when the metric satisfies certain positive curvature conditions, but the full conjecture remains open. 

\begin{rem} It is not entirely clear that the number two is optimal in the twin bubble conjecture. In fact, to the authors' knowledge, it is not known whether there exist a closed manifold $M$ and a constant $c >0$ such that $M$ contains only finitely many closed hypersurfaces with constant mean curvature $c$. 
\end{rem} 

The present work grew out of the authors' efforts to understand the Twin Bubble Conjecture. Theorem \ref{theorem:dichotomy} shows that, at least for generic metrics, if the Twin Bubble Conjecture fails for a particular constant $c >0$, then $M$ must contain infinitely many constant mean curvature hypersurfaces with mean curvature less than $c$, each enclosing half the volume of $M$. In particular, this suggests that a thorough understanding of the space of half-volume constant mean curvature hypersurfaces in $M$ may be helpful for constructing hypersurfaces with prescribed constant mean curvature. 
The work of the authors \cite{mazurowski2023half}\cite{mazurowski2024infinitely} shows that a closed manifold $M^{n+1}$ of dimension $3\le n+1\le 5$ equipped with a generic metric $g$ always contains a sequence of closed, almost-embedded half-volume constant mean curvature hypersurfaces $\Sigma_p$ with $\area(\Sigma_p)\to \infty$ as $p\to \infty$.  In view of Theorem \ref{theorem:dichotomy}, it would be very interesting to determine whether the mean curvature of these hypersurfaces goes to 0 as $p\to \infty$. 

\subsection{Outline of Paper} In Section \ref{section:preliminaries}, we first discuss some preliminary material from geometric measure theory. We then recall the authors' min-max theory for functionals of the form $\Omega\mapsto \area(\bd \Omega) + f(\vol(\Omega))$. Finally, we briefly review the idea of the suspension of a topological space. 

In Section \ref{section:dichotomy}, we give the proof of Theorem \ref{theorem:dichotomy}.  The proof is divided into three steps. In the first step, we apply Dey's lifting construction \cite{Dey23} to find infinitely many relative homotopy classes suitable for applying min-max theory to the functional 
\[
\Omega\mapsto \area(\bd \Omega) + c\left\vert \vol(\Omega) - \frac{1}{2}\vol(M)\right\vert.
\]
In the second step, we regularize this functional to 
\[
F_{\eps,\delta}(\Omega) = \area(\bd \Omega) - \delta \int_\Omega h + c \sqrt{\left(\vol(\Omega)-\frac{1}{2}\vol(M)\right)^2 + \eps^2}, 
\]
where $h\colon M \to \R$ is a suitable Morse function and $\eps$ and $\delta$ are small positive constants. We then apply min-max theory to find smooth critical points $\Omega_{\eps,\delta}$ of $F_{\eps,\delta}$. Finally, in the third step we let $\delta \to 0$ and then let $\eps \to 0$ and obtain convergence $\Omega_{\eps,\delta}\to \Omega$. We then show that $\Sigma = \bd \Omega$ is either an almost-embedded hypersurface with constant mean curvature $c$, or an almost-embedded hypersurface with constant mean curvature less than $c$ but enclosing half the volume of $M$.

\subsection{Acknowledgements}
L.M. acknowledges the support of an AMS Simons Travel Grant. X.Z. acknowledges the support by NSF grant DMS-1945178, and a grant from the Simons Foundation (1026523, XZ).

%%%%%%%%%%%%%%%%%%%%%%%%%%%%%%%%%%%%%%%%%%%%%%%%%%%%
\section{Preliminaries}
\label{section:preliminaries}

In this section we introduce some background material that will be needed for the proof of the main theorem. 

\subsection{Geometric Measure Theory}
In this section, we recall some background from geometric measure theory. For more details we refer to \cite{almgren1965theory}, \cite[Chapter 2]{pitts2014existence}, and \cite{simon1983lectures}. Let $M^{n+1}$ be a closed manifold. 
\begin{itemize}
\item Let $\mathcal V(M)$ denote the space of $n$-dimensional varifolds in $M$. 
\item Given $V\in \mathcal V(M)$, let $\|V\|$ denote the weight measure associated to $V$. 
\item Let $\mathcal Z_n(M,\Z_2)$ be the space of $n$-dimensional flat chains mod 2 in $M$. 
\item Given $T\in \mathcal Z_n(M,\Z_2)$, let $\vert T\vert\in \mathcal V(M)$ denote the varifold induced by $T$. 
\item Let $\mathcal C(M)$ be the space of Caccioppoli sets in $M$. 
\item Given $\Omega \in \C(M)$, let $\bd \Omega\in \mathcal Z_n(M,\Z_2)$ be the flat chain induced by the boundary of $\Omega$. 
\item Let $\mathcal B(M,\Z_2)$ be the set of all $T \in \mathcal Z_n(M,\Z_2)$ such that $T = \bd \Omega$ for some $\Omega\in \C(M)$. This is the connected component of the zero cycle in $\mathcal Z_n(M,\Z_2)$.
\item Let $\mathcal F$, $\mathbf F$, and $\mathbf M$ denote the flat topology, the $\mathbf F$ topology, and the mass topology, respectively. By definition, the $\mathbf F$ topology on $\C(M)$ is given by 
\[
\mathbf F(\Omega_1,\Omega_2) = \mathcal F(\Omega_1,\Omega_2) + \mathbf F(\vert \bd \Omega_1\vert, \vert \bd \Omega_2\vert). 
\]
\item Let $\vz(M,\Z_2)$ and $\vc(M)$ denote the $\vz$ and $\vc$ spaces. The $\vz$ space was first introduced by Almgren \cite{almgren1965theory}, and the related $\vc$ space first appeared in \cite{wang2023existence}.
\end{itemize}
We will describe the $\vz$ and $\vc$ spaces in slightly more detail, since this spaces are less well-known.   For more a more thorough introduction to these spaces and their applications in min-max theory see \cite{wang2023existence} and \cite{mazurowski2024infinitely}.

\begin{defn}
    The space $\vz(M,\Z_2)$ consists of all pairs $(V,T)\in \mathcal V(M)\times \mathcal B(M,\Z_2)$ such that there exists a sequence $T_i \in \mathcal B(M,\Z_2)$ with $\vert T_i\vert \to V$ as varifolds and $T_i \to T$ in the flat topology on $\mathcal B(M,\Z_2)$. 
\end{defn}

Given $(V,T)\in \vz(M,\Z_2)$ it is not necessarily true that $V = \vert T\vert$. However, it is always true that $\|\, \vert T\vert\, \| \le \|V\|$ as measures. We equip $\vz(M,\Z_2)$ with the $\mathscr F$ topology given by 
\[
\mathscr F((V_1,T_1),(V_2,T_2)) = \mathbf F(V_1,V_2) + \mathcal F(T_1,T_2).
\]
The $\vc$ space is defined similarly but using $\C(M)$ in place of $\B(M,\Z_2)$. 

\begin{defn}
    The space $\vc(M)$ consists of all pairs $(V,\Omega) \in \mathcal V(M)\times \C(M)$ such that there exists a sequence $\Omega_i \in \C(M)$ with $\vert \bd \Omega_i\vert \to V$ as varifolds and $\Omega_i\to \Omega$ as Caccioppoli sets. The $\mathscr F$ topology on $\vc(M)$ is given by $\mathscr F((V_1,\Omega_1),(V_2,\Omega_2)) = \mathbf F(V_1,V_2) + \mathcal F(\Omega_1,\Omega_2)$. 
\end{defn}

\subsection{Min-Max Notions} In this subsection, we describe the min-max constructions for the $E$ and $F$ functionals developed in \cite{mazurowski2024infinitely}. Let $M$ be a closed Riemannian manifold. 

First we describe the relevant parameter spaces and homotopy classes. Let $I(1,k)$ denote the cubical complex structure on $[0,1]$ with vertices $[0],[3^{-k}],[2\cdot 3^{-k}],\hdots,[1]$ and edges $[0,3^{-k}], [3^{-k},2\cdot 3^{-k}], \hdots, [1-3^{-k},1]$. Then let 
\[
I(m,k) = \underbrace{I(1,k)\otimes \cdots \otimes I(1,k)}_{m\text{ times}}
\]
be the cubical complex structure subdividing $I^m$ into $3^{mk}$ congruent subcubes. 

Let $X$ be a cubical subcomplex of $I(m,k)$ for some $m,k\in \N$.  Fix an $\mathbf F$-continuous map $\Phi\colon X\to (\mathcal B(M,\Z_2),\mathbf F)$. 

\begin{defn}
\label{abs-homotopy}
    The $X$-homotopy class of the map $\Phi$ is the set of all sequences of maps $\{\Phi_i\colon X\to (\mathcal B(M,\Z_2),\mathbf F)\}$ such that there exist $\mathbf F$ continuous homotopies $H_i\colon X\times [0,1]\to (\mathcal B(M,\Z_2),\textcolor{cyan}{\mathbf F})$ satisfying $H_i(x,0) = \Phi(x)$ and $H_i(x,1) = \Phi_i(x)$.
\end{defn}

Now assume that $Z$ is a cubical subcomplex of $X$. Consider an $\mathbf F$-continuous map $\Psi\colon X\to (\C(M),\mathbf F)$.

\begin{defn}
\label{rel-homotopy}
    The $(X,Z)$-relative homotopy class of the map $\Psi$ is the set of all sequences of maps $\{\Psi_i\colon X\to (\C(M),\mathbf F)\}$ such that there exist $\mathbf F$-continuous homotopy maps $H_i\colon X\times [0,1]\to (\C(M),\mathbf F)$ satisfying $H_i(x,0) = \Psi(x)$ and $H_i(x,1) = \Psi_i(x)$ and 
    \[
    \limsup_{i\to\infty} \left[\sup_{(z,t)\in Z\times [0,1]} \mathbf F(H_i(z,t),\Psi(z))\right] = 0. 
    \]
\end{defn}

\begin{rem}
    In \cite{mazurowski2024infinitely}, the min-max theory was developed only requiring the homotopies $H_i$ in Definitions \ref{abs-homotopy} and \ref{rel-homotopy} to be $\mathcal F$ continuous. However, inspecting the proofs reveals that it is actually possible to require that $H_i$ is $\mathbf F$ continuous. Indeed, Propositions 1.14 and 1.15 in \cite{zhou2020multiplicity} can be used to ensure all relevant homotopies are $\mathbf F$-continuous.  See the remark above Definition 3.4 in \cite{dey2022comparison}, where the same observation was used.
\end{rem}

Next, we describe the relevant functionals. Fix a smooth function $f\colon [0,\vol(M)] \to \R$. Let $\hv = \frac{1}{2}\vol(M)$ and assume $f$ is even in the sense that $f(\hv + t) = f(\hv - t)$ for all $t\in [0,\hv]$. 

\begin{defn}
    Define $E \colon \mathcal B(M,\Z_2) \to \R$ by 
    \[
    E(T) = \M(T) + f(\vol(\Omega))
    \]
    where $\Omega\in \C(M)$ is any set satisfying $\bd \Omega = T$. Note that $E$ is well-defined since $f$ is assumed to be even. The $E$ functional can also be extended to $\vz(M,\Z_2)$ by setting $E(V,T) = \|V\|(M) + f(\vol(\Omega))$ where $\bd \Omega = T$. 
\end{defn}

\begin{defn}
    Let $\Pi$ be the $X$-homotopy class of a map $\Phi\colon X\to (\mathcal B(M,\Z_2),\mathbf F)$. We define the min-max value 
    \[
    L^E(\Pi) = \inf_{\{\Phi_i\}\in \Pi} \left[\limsup_{i\to \infty} \sup_{x\in X} E(\Phi_i(x))\right].
    \]
\end{defn}

In \cite{mazurowski2024infinitely}, the authors proved the following min-max theorem for the $E$ functional. 

\begin{theorem}\label{E-min-max} 
    Let $M^{n+1}$ be a closed manifold of dimension $3\le n+1\le 7$. Let $\Pi$ be the $X$-homotopy class of a map $\Phi\colon X\to (\mathcal B(M,\Z_2),\mathbf F)$. Assume that $L^E(\Pi) > 0$. Then there exists $(V,T)\in \vz(M,\Z_2)$ with $E(V,T) = L^E(\Pi)$. Moreover, $(V,T)$ has the following regularity. Choose $\Omega\in \C(M)$ with $\bd \Omega = T$ and then let $H = -f'(\vol(\Omega))$. 
    \begin{itemize}
        \item[(i)] If $H\neq 0$ then there exists a smooth, closed, almost-embedded hypersurface $\Lambda$ with constant mean curvature $H$ such that $\bd \Omega = \Lambda$. Moreover, there is a (possibly empty) collection of minimal hypersurfaces $\Sigma_1,\hdots,\Sigma_k$ together with multiplicities $m_1,\hdots,m_k \in \N$ such that 
        \[
        V = \vert \Lambda \vert + \sum_{j=k}^\ell m_j \vert \Sigma_j\vert. 
        \]
        The hypersurfaces $\Lambda,\Sigma_1,\hdots,\Sigma_k$ are all disjoint.
        \item[(ii)] If $H = 0$ then there exists a collection of minimal hypersurfaces $\Lambda_1,\hdots,\Lambda_q$ such that $\bd \Omega = \Lambda_1 \cup \hdots \cup \Lambda_q$. Moreover, there is a (possibly empty) collection of minimal hypersurfaces $\Sigma_1,\hdots,\Sigma_k$ and multiplicites $\ell_1,\hdots,\ell_q,m_1,\hdots,m_k \in \N$ such that 
        \[
        V = \sum_{i=1}^q \ell_i \vert \Lambda_i\vert + \sum_{j=1}^k m_j\vert \Sigma_j\vert.
        \]
        The hypersurfaces $\Lambda_1,\hdots,\Lambda_q,\Sigma_1,\hdots,\Sigma_k$ are all disjoint. 
    \end{itemize}
\end{theorem}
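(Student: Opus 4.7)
The plan is to adapt the Almgren-Pitts min-max scheme to the $\vz(M,\Z_2)$ setting, treating $E$ as a geometric functional whose critical points are captured by pairs $(V,T)$ rather than by varifolds alone. The analysis naturally splits into an existence step (producing a critical pair $(V,T)$) and a regularity step (describing the structure of $\spt V$ and $\spt |T|$).

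For existence, I would start with a minimizing sequence $\{\Phi_i\} \in \Pi$ realizing $L^E(\Pi)$. The first goal is a pull-tight: replace $\{\Phi_i\}$ by a homotopic sequence $\{\Psi_i\}$ so that every $\mathscr{F}$-subsequential limit $(V,T)$ of a slice with $E$-value $L^E(\Pi)$ has the property that $V$ has generalized mean curvature $H = -f'(\vol(\Omega))$ in the direction of the outer normal to $\bd\Omega$, where $\Omega\in \C(M)$ satisfies $\bd\Omega = T$. This is done by constructing, for each non-stationary pair, a continuous $E$-decreasing deformation in the $\mathbf{F}$ topology and gluing these local deformations together via a partition of unity on $X$, using the $\mathbf{F}$-continuity refinement noted in the remark above. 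Evenness of $f$ is crucial for $E$ to be well defined on $\B(M,\Z_2)$.

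Next, I would run the Almgren-Pitts combinatorial discretization/interpolation argument (in its $\mathbf{F}$-continuous form) to extract from $\{\Psi_i\}$ a sequence of slices that is $E$-almost-minimizing in every sufficiently small annulus around every point. By sequential compactness of $\vz(M,\Z_2)$ in the $\mathscr{F}$ topology, pass to a limit $(V,T)$ with $E(V,T) = L^E(\Pi)$.

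For regularity, set $H = -f'(\vol(\Omega))$. In case (i), $H\neq 0$: the support of $|T|$ carries a varifold which is stationary for the PMC functional $\M + H\vol$, and, being almost-minimizing and codimension one in dimension $\le 7$, the regularity theorem of \cite{zhou2019min} produces a smooth almost-embedded $\Lambda$ with constant mean curvature $H$ and $\bd\Omega = \Lambda$. The ``phantom'' excess $V - |\Lambda|\ge 0$ is a stationary integral varifold (since stationarity of $V$ for $E$, after subtracting the $\Lambda$-contribution, becomes stationarity in the classical sense), and the almost-minimizing property together with Schoen-Simon regularity forces it to be a finite integer combination of smooth closed minimal hypersurfaces $\Sigma_j$. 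Disjointness of $\Lambda$ from each $\Sigma_j$ follows from the strong maximum principle, since $\Lambda$ has nonzero mean curvature while each $\Sigma_j$ is minimal. Case (ii) is analogous with $H=0$, and both $\bd\Omega$ and the excess varifold decompose into smooth closed minimal hypersurfaces.

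The main obstacle I anticipate is transferring the almost-minimizing property from the sweepout level all the way to the varifold $V$, not merely to the boundary $T$. Because elements of $\vz(M,\Z_2)$ can have $\|\,|T|\,\| < \|V\|$ due to cancellation of nearly parallel sheets in the flat norm, the ``extra'' mass in $V - |T|$ needs its own almost-minimizing certificate in order for Schoen-Simon regularity to apply; this is exactly what motivates working in $\vz$ rather than in $\mathcal{Z}_n$ alone. A secondary technical point is showing that $\bd\Omega = \Lambda$ as flat chains (rather than $\Lambda$ appearing with even multiplicity in the boundary), which uses the $\Z_2$-valued nature of $T$ together with the almost-minimizing property at the boundary.
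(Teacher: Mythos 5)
This theorem is not proved in the present paper: it is quoted verbatim in the preliminaries as a result established in the authors' prior work \cite{mazurowski2024infinitely}, so there is no in-paper argument to compare against line by line. Your sketch reproduces the strategy of that reference faithfully --- pull-tight for $E$ in the $\mathbf F$-continuous setting, the Almgren--Pitts almost-minimizing-in-annuli argument, $\mathscr F$-compactness of $\vz(M,\Z_2)$, CMC regularity in the style of \cite{zhou2019min} for the part of $V$ coinciding with $\bd\Omega$, Schoen--Simon regularity for the excess (``hidden'') minimal part, and the maximum principle for disjointness --- and you correctly identify the central subtlety (the cancellation $\|\,\vert T\vert\,\|\le\|V\|$ forcing one to track an almost-minimizing certificate for $V$ itself, which is precisely why the $\vz$ space is used). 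I see no misstep at the level of detail given.
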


Finally, fix a smooth Morse function $h\colon M\to \R$. We assume that $h$ satisfies property (T); see \cite[Definition 2.10]{mazurowski2024infinitely}. The set of Morse functions satisfying property (T) is dense in $C^\infty(M)$ by \cite[Proposition A.3]{mazurowski2024infinitely}.

\begin{defn}
    Define $F\colon \C(M)\to \R$ by 
    \[
    F(\Omega) = \M(\bd \Omega) - \int_\Omega h + f(\vol(\Omega)).
    \]
    The $F$ functional can also be extended to $\vc(M)$ by setting $F(V,\Omega) = \|V\|(M)-\int_\Omega h + f(\vol(\Omega))$. 
\end{defn}

\begin{defn}
    Let $\Pi$ be the $(X,Z)$-homotopy class of a map $\Psi\colon X\to (\C(M),\mathbf F)$. We define the min-max value 
    \[
    L^F(\Pi) = \inf_{\{\Psi_i\}\in \Pi} \left[\limsup_{i\to\infty} \sup_{x\in X} F(\Psi_i(x))\right].
    \]
\end{defn}

In \cite{mazurowski2024infinitely}, the authors proved the following min-max theorem for the $F$ functional. 

\begin{theorem}
    Let $M^{n+1}$ be a closed manifold of dimension $3\le n+1\le 7$. Let $\Pi$ be the $(X,Z)$-homotopy class of a map $\Psi\colon X\to (\C(M),\mathbf F)$. Assume that 
    \[
    L^F(\Pi) > \sup_{z\in Z} F(\Psi(z)). 
    \]
    Then there exists $(V,\Omega)\in \vc(M)$ satisfying $F(V,\Omega) = L^F(\Pi)$. Moreover, there exists a smooth, almost-embedded hypersurface $\Lambda$ with mean curvature $H = h|_\Lambda - f'(\vol(\Omega))$ such that $\bd \Omega = \Lambda$ and $V = \vert \Lambda\vert$. 
\end{theorem}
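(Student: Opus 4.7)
The plan is to follow the min-max framework for the $F$ functional originally developed by the authors in \cite{mazurowski2024infinitely}, adapted to the $(X,Z)$-relative setting. I would begin by selecting a min-max sequence $\{\Psi_i\}\in \Pi$ with $\limsup_i \sup_{x\in X} F(\Psi_i(x)) = L^F(\Pi)$, and then carry out a pull-tight procedure: replace $\{\Psi_i\}$ by a homotopic sequence $\{\Psi_i'\}$ (still in the class $\Pi$) so that every $\mathscr F$-subsequential limit $(V,\Omega)$ of $(\vert \bd \Psi_i'(x_i)\vert, \Psi_i'(x_i))$ arising from an almost-maximizing sequence $x_i \in X$ is $F$-stationary, i.e. has vanishing first variation under ambient deformations on $M$.

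The hypothesis $L^F(\Pi) > \sup_{z\in Z} F(\Psi(z))$ is essential at this step. By $\mathbf F$-continuity of $\Psi$ and an elementary compactness argument, any almost-maximizer $x_i$ must remain a definite distance from $Z$ for large $i$, so pull-tight deformations can be cut off to act trivially on a neighborhood of $Z$, preserving membership in the relative homotopy class $\Pi$. Together with $\mathscr F$-compactness of sublevel sets of $F$ in $\vc(M)$, this yields an $F$-stationary critical element $(V,\Omega)\in \vc(M)$ with $F(V,\Omega) = L^F(\Pi)$.

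The regularity conclusion combines two ingredients. First, the standard almost-minimizing-in-annuli property (inherited from the min-max construction) together with the Zhou--Zhu prescribed mean curvature regularity theory shows that $\spt V$ is contained in a smooth almost-embedded hypersurface with generalized mean curvature $h - f'(\vol(\Omega))$ along $\bd \Omega$. Second, and critically distinguishing this from the $E$-theorem above, the perturbation term $-\int_\Omega h$ with $h$ a Morse function satisfying property (T) rules out any phantom minimal component of $V$ disjoint from $\bd \Omega$: such a component could be eliminated by a one-sided deformation strictly decreasing $F$ (since $h$ takes a nonzero value on it), contradicting stationarity and the min-max property. Consequently $V = \vert \Lambda \vert$ for a single smooth, closed, almost-embedded hypersurface $\Lambda = \bd \Omega$ with mean curvature $H = h|_\Lambda - f'(\vol(\Omega))$.

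The main technical obstacle I anticipate is upgrading every homotopy constructed during the pull-tight procedure from $\mathcal F$-continuous to $\mathbf F$-continuous, so as to respect Definition \ref{rel-homotopy}. This is precisely the improvement flagged in the remark immediately following that definition, and it is handled via the $\mathbf F$-continuous interpolation results of \cite[Propositions 1.14 and 1.15]{zhou2020multiplicity}. A secondary bookkeeping issue is tracking both the varifold $V$ and the Caccioppoli set $\Omega$ throughout the argument, since $\vert \bd \Omega\vert \le V$ in general but equality must be proved at the critical element; it is for exactly this reason that the $\vc(M)$ space of \cite{wang2023existence} was introduced.
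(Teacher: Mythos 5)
This theorem is not proved in the paper: it is quoted verbatim from the authors' earlier work \cite{mazurowski2024infinitely} (Theorem 1.10 there), so there is no in-paper argument to compare against. Your outline does track the strategy of that reference correctly at the structural level: a pull-tight adapted to the relative class $\Pi$ (using the gap $L^F(\Pi) > \sup_{z\in Z} F(\Psi(z))$ to keep the tightening deformation trivial near $Z$), extraction of an $F$-stationary, almost-minimizing-in-annuli element of $\vc(M)$, and the Zhou--Zhu style regularity theory for prescribed mean curvature, with the $\mathbf F$-continuity upgrade handled by the interpolation results of \cite{zhou2020multiplicity}.

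There is, however, one genuine flaw in your sketch: the mechanism you give for ruling out a ``phantom'' minimal component of $V$ disjoint from $\bd\Omega$ does not work. Such a component $\Sigma$ contributes only through the area term $\|V\|(M)$; since it is detached from $\bd\Omega$, any deformation supported near $\Sigma$ leaves $\Omega$, and hence $\int_\Omega h$ and $f(\vol(\Omega))$, unchanged. If $\Sigma$ is minimal it is therefore $F$-stationary to first order, and no one-sided deformation ``strictly decreases $F$, contradicting stationarity'' --- the fact that $h$ is nonzero on $\Sigma$ is irrelevant to the first variation of a component that does not bound. The actual elimination of extra components in \cite{mazurowski2024infinitely} (following Zhou--Zhu) is not a first-variation argument: it goes through the almost-minimizing property in annuli, the construction of replacements which are boundaries with prescribed mean curvature $h - f'(\vol(\Omega))$, and a maximum-principle/unique-continuation comparison between the replacement and the putative minimal piece; property (T) of $h$ enters to guarantee that no closed minimal hypersurface can be compatible with the prescribed-mean-curvature structure of the replacements. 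As a citation-level summary your proposal is serviceable, but as a proof this step would need to be replaced by the replacement-theoretic argument.
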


Finally, we give the definition of $p$-sweepouts and the volume spectrum.  It is known that the cohomology ring of $\mathcal B(M,\Z_2)$ with $\Z_2$ coefficients is isomorphic to $\Z_2[\lambda]$ with generator $\lambda$ in degree one. This follows from a more general result of Almgren \cite{almgren1962homotopy}; also see Marques-Neves \cite{marques2021morse} for a simpler proof in this special case.

\begin{defn}
Let $X$ be a cubical subcomplex of some $I(m,k)$. A flat continuous map $\Phi\colon X\to \B(M,\Z_2)$ is called a $p$-sweepout provided $\Phi^*\lambda^p \neq 0$ in $H^p(X,\Z_2)$. 
\end{defn}

\begin{defn}
Following \cite{marques2017existence}, we say that a $p$-sweepout $\Phi\colon X\to \B(M,\Z_2)$ has no concentration of mass provided
\[
\lim_{r\to 0} \bigg[\sup\{\M(\Phi(x)\llcorner B(q,r)): x\in X,\ q\in M\}\bigg] = 0.
\]
Let $\mathcal P_p$ denote the set of all $p$-sweepouts with no concentration of mass.
\end{defn}

\begin{defn}
The volume spectrum $\{\omega_p\}_{p\in \N}$ is defined by 
\[
\omega_p = \inf_{\Phi\in \mathcal P_p} \left[\sup_{x\in{\operatorname{dom(\Phi)}}} \M(\Phi(x))\right];
\]
see \cite{gromov2006dimension} and \cite{liokumovich2018weyl}. 
\end{defn}

\subsection{The Suspension of a Topological Space} In this short subsection, we briefly recall the idea of the suspension of a topological space. Later we will use suspensions to construct suitable parameter spaces on which to apply min-max theory.  

\begin{defn} Let $Y$ be a topological space. The suspension of $Y$ is the topological space 
\[
SY = Y\times [-1,1]/\sim
\]
where $(y_1,1)\sim (y_2,1)$ and $(y_1,-1)\sim (y_2,-1)$ for all $y_1,y_2\in Y$. 
\end{defn}

Note that $Y$ is naturally identified with the subspace $Y\times \{0\} \subset SY$. The suspension $SY$ can also be viewed as the union of two cones on $Y$, glued along a common copy of $Y$. More precisely, let $C_+ = Y\times [0,1]/\sim$ where $(y_1,1)\sim (y_2,1)$, and let $C_- = Y\times[-1,0]/\sim$ where $(y_1,-1)\sim(y_2,-1)$. Then $C_+$ and $C_-$ are both cones on $Y$, and $SY$ is obtained by gluing $C_+$ to $C_-$ along $Y\times \{0\}$. Note that if $Y$ is a cubical complex, then $SY$ also has the structure of a cubical complex. 

Finally, consider the case where $Y$ admits a free $\Z_2$ action $y\mapsto -y$. Then this free $\Z_2$ action extends to a free $\Z_2$ action on $SY$ given by $(y,t)\mapsto (-y,-t)$. Let $\overline Y = Y/\Z_2$ and $\overline{SY} = SY/\Z_2$ denote the quotient spaces. Then $\overline Y$ is naturally identified with the subset $(Y\times \{0\})/\Z_2 \subset \overline{SY}$.

%%%%%%%%%%%%%%%%%%%%%%%%%%%%%%%%%%%%%%%%%%%%%%%%%%%%
\section{Proof of the Alternative}
\label{section:dichotomy}

The goal of this section is to prove Theorem \ref{theorem:dichotomy}. Let $M^{n+1}$ be a closed Riemannian manifold of dimension $3\le n+1\le 7$. Let $g$ be a Riemannian metric on $M$ such that no collection of minimal hypersurfaces encloses half the volume of $M$. Such metrics are generic in the sense of Baire category by \cite[Proposition B.1]{mazurowski2024infinitely}. Fix a constant $c > 0$. 

\begin{defn} Given $\epsilon>0$, let 
$f_\eps(t) = \sqrt{t^2+\eps^2}.$
\end{defn} 

Observe that $\vert f_\eps'(t)\vert \le 1$ for all $t\in \R$. Moreover, the functions $f_\eps$ converge uniformly to the absolute value function as $\eps \to 0$.

\begin{defn}
Let $\hv = \frac{1}{2}\vol(M)$. 
\end{defn}

Next we define the functionals to which we will apply mix-max theory. 

\begin{defn} For each $\eps > 0$, define $E_\eps: \B(M,\Z_2)\to \R$ by
\[ E_{\eps}(T) = \M(T) - c \cdot f_\eps\big(\vol(\Omega)-\hv\big), \]
where $\Omega \in \C(M)$ satisfies $\bd \Omega = T$. Also define 
\[
E_0(T) = \M(T) - c \vert \vol(\Omega)-\hv\vert,
\]
where again $\Omega\in \C(M)$ satisfies $\bd \Omega = T$. 
\end{defn}

Recall that $\{\omega_p\}_{p\in \N}$ denotes the volume spectrum of $M$ and that $\mathcal P_p$ is the set of all $p$-sweepouts with no concentration of mass.
In our case, we our interested in the following alternative min-max values where the area functional is replaced by $E_0$. 

\begin{defn} Define the min-max values 
\[
\alpha_p = \inf_{\Phi \in \mathcal P_p} \left[\sup_{x\in \operatorname{dom}(\Phi)} E_0(\Phi(x)) \right].
\]
\end{defn}

Observe that 
$
\omega_p - c \hv \le \alpha_p \le \omega_p
$
for all $p\in \N$. In particular, we have $\alpha_p \to \infty$ as $p\to \infty$ and so there are infinitely many $p$'s for which there is a gap $\alpha_p < \alpha_{p+1}$. Therefore, the following Theorem \ref{theorem:gap} implies Theorem \ref{theorem:dichotomy} as an immediate corollary. The remainder of this section will be devoted to the proof of Theorem \ref{theorem:gap}.

\begin{theorem}
\label{theorem:gap}
    Fix $p\in \N$ for which there is a gap $\alpha_p < \alpha_{p+1}$. Then there exists a closed, almost-embedded hypersurface $\Lambda$ in $M$ with $\area(\Lambda) \ge \alpha_{p+1}$ such that either 
    \begin{itemize}
        \item[(i)] $\Lambda$ has constant mean curvature $c$, or 
        \item[(ii)] $\Lambda$ has constant mean curvature less than $c$ and encloses half the volume of $M$. 
    \end{itemize}
\end{theorem}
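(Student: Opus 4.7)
The plan is to apply the $F$-functional min-max theorem of Section~\ref{section:preliminaries} to a suitable $(Y,Z)$-relative homotopy class $\Pi$ in $(\C(M),\F)$, using the smoothed functional $F_{\eps,\delta}(\Omega)=\M(\bd\Omega)-\delta\int_\Omega h + cf_\eps(\vol(\Omega)-\hv)$, and then to take a double limit $\delta\to 0$ followed by $\eps\to 0$. Step~1 is the construction of $(Y,Z,\Pi)$ via Dey's lifting. Starting from near-optimal $(p+1)$- and $p$-sweepouts $\Phi_{p+1}\f X\to(\B(M,\Z_2),\F)$ and $\Phi_p$ with $\sup E_0\circ \Phi_{p+1}\le \alpha_{p+1}+o(1)$ and $\sup E_0\circ \Phi_p \le \alpha_p+o(1)$, one uses the $\Z_2$-double cover $\C(M)\setminus\{\emptyset,M\}\to \B(M,\Z_2)\setminus\{0\}$ (which classifies the generator $\lambda$) together with the suspension construction of Section~\ref{section:preliminaries} to produce the parameter pair $Y=\overline{S\widetilde X}$ and an $\F$-continuous map $\Psi\f Y\to(\C(M),\F)$. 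The subcomplex $Z$ corresponds to the two cone tips of the suspension, where $\Psi$ is parametrized by a Caccioppoli version of $\Phi_p$ lying in small $\F$-neighborhoods of $\emptyset$ and $M$. Composing $\Psi$ with the projection $\C(M)\to \B(M,\Z_2)$ produces a $(p+1)$-sweepout, which yields the lower bound $L^{F_{\eps,\delta}}(\Pi)\ge \alpha_{p+1}-o(1)$, while the boundary control gives $\sup_{z\in Z}F_{\eps,\delta}(\Psi(z))\le \alpha_p+o(1)$. The hypothesis $\alpha_p<\alpha_{p+1}$ therefore produces a strict min-max gap for $F_{\eps,\delta}$ on $\Pi$ once $\eps,\delta$ are small enough.

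In Step~2, choose a Morse function $h\f M\to\R$ satisfying property (T) and apply the $F$-min-max theorem. This produces a smooth, almost-embedded critical point $\Omega_{\eps,\delta}\in\C(M)$ with $F_{\eps,\delta}(\Omega_{\eps,\delta})=L^{F_{\eps,\delta}}(\Pi)$ whose boundary $\Sigma_{\eps,\delta}=\bd\Omega_{\eps,\delta}$ has mean curvature
\[
H_{\eps,\delta}=\delta\,h\vert_{\Sigma_{\eps,\delta}}-c\,f_\eps'\!\left(\vol(\Omega_{\eps,\delta})-\hv\right)
\]
and associated varifold $|\Sigma_{\eps,\delta}|$, with no extra minimal components. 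In Step~3, the uniform bounds $|H_{\eps,\delta}|\le c+\delta\|h\|_\infty$ and $\M(\Sigma_{\eps,\delta})\le L^{F_{\eps,\delta}}(\Pi)+\delta\|h\|_\infty\vol(M)$, together with standard compactness for almost-embedded CMC hypersurfaces, yield subsequential $\mathscr F$-limits $\Omega_{\eps,\delta}\to \Omega_\eps$ as $\delta\to 0$ and then $\Omega_\eps\to\Omega$ as $\eps\to 0$. Writing $\Lambda=\bd\Omega$, lower semicontinuity of mass together with the construction gives the area bound $\area(\Lambda)\ge \alpha_{p+1}$. The alternative is dictated by the limit volume: if $\vol(\Omega)\ne \hv$, then $f_\eps'(\vol(\Omega_\eps)-\hv)\to \pm 1$ and $\Lambda$ has constant mean curvature exactly $c$, giving case (i); if $\vol(\Omega)=\hv$, then only $|H|\le c$ survives in the limit and $\Lambda$ encloses half the volume of $M$, giving case (ii).

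The principal obstacle is Step~1: constructing a single relative homotopy class $\Pi$ that simultaneously carries the cohomological $(p+1)$-sweepout property in its interior (to force $L\ge \alpha_{p+1}$) and an energy bound inherited from a near-optimal $p$-sweepout on its boundary $Z$ (to force $\sup_Z F_{\eps,\delta}\le \alpha_p$), all while remaining $\F$-continuous. The suspension-and-quotient machinery combined with Dey's lifting is designed precisely for this task. A secondary technical issue lies in the double limit: passing $\delta\to 0$ could in principle produce phantom minimal-surface components in the varifold limit, but this is ruled out because the Morse perturbation $-\delta\int_\Omega h$ in $F_{\eps,\delta}$ breaks such degeneracies and the $F$-min-max theorem outputs a multiplicity-one varifold; and the $\eps\to 0$ limit at $\vol=\hv$ requires the generic-metric hypothesis that no collection of minimal hypersurfaces encloses half the volume of $M$ in order to guarantee that the half-volume limit is a genuinely almost-embedded CMC hypersurface with $|H|\le c$ rather than a spurious minimal configuration.
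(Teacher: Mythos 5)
Your overall strategy coincides with the paper's: build a relative homotopy class via Dey's lifting and the suspension, run min-max for the regularized functional $F_{\eps,\delta}$, take $\delta\to 0$ and then $\eps\to 0$, and invoke the generic-metric hypothesis to exclude a minimal half-volume configuration. However, Step 1 as you describe it has a genuine gap, precisely at the point you flag as the principal obstacle. First, the subcomplex $Z$ is not the pair of cone tips: in the paper it is the equatorial band $Y\times[-2^{-1},2^{-1}]$ of the suspension $X=SY$, on which $\Psi$ equals the lift $\Phi$ of a near-optimal $p$-sweepout (the cone tips are where the map degenerates to $\emptyset$ and $M$). This is not a cosmetic choice. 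The lower bound $L^{F_{\eps,\delta}}(\Pi)\ge \alpha_{p+1}-o(1)$ cannot be deduced, as you suggest, from the fact that the initial map $\Psi$ projects to a $(p+1)$-sweepout: the min-max value is an infimum over all competitors $\{\Psi_i\}$ in the relative homotopy class, and a general competitor $\Psi_i\colon X\to\C(M)$ does not descend to $\overline X\to\B(M,\Z_2)$, because the equivariance $\Psi_i(x)=M\setminus\Psi_i(-x)$ is not preserved under homotopies in the contractible space $\C(M)$. The paper's Proposition \ref{prop:quotient} repairs this by splicing $\Psi_i$ restricted to the upper cone $C_+$ with the homotopy $H_i$ on $Z\times[0,1]$, so that the spliced map agrees with $\Psi$ on the equator $Y\times\{0\}$, hence satisfies the equivariance there and descends to a $(p+1)$-sweepout $\overline\Xi_i$ of $\overline X$; since the spliced portion over $Z\times[0,1]$ stays below $\alpha_{p+1}$ by the relative-homotopy constraint, the supremum of $\widetilde E_0\circ\Psi_i$ must reach $\alpha_{p+1}$. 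This mechanism requires $Z$ to contain the equator and competitors to remain $\mathbf F$-close to $\Psi$ on $Z$; with $Z$ placed at the cone tips the argument is unavailable and the lower bound is unproved.

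Two smaller corrections. In the $\delta\to 0$ limit the Morse perturbation does not prevent the appearance of minimal components: the paper's compactness theory (which relies on property (R), i.e.\ almost minimizing in annuli, not merely on curvature and mass bounds) explicitly allows the alternative in which $V_\eps$ or $V$ is a union of minimal hypersurfaces with multiplicities; that alternative is excluded only at the end, because stationarity forces $\vert H\vert = c\,\vert f_\eps'(\vol(\Omega)-\hv)\vert$, so $H=0$ implies $\vol(\Omega)=\hv$, contradicting the genericity assumption. Finally, the area bound follows not from lower semicontinuity alone but from $\|V_k\|(M)\ge E_{\eps_k}(V_k,\bd\Omega_k)=L^{F}(\Pi)+o(1)\ge \alpha_{p+1}-o(1)$ together with continuity of total mass under varifold convergence.
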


\begin{proof}  
Fix some $p\in \N$ for which $\alpha_p < \alpha_{p+1}$. 

\vspace{0.5em}
{\bf Step 1:} We will construct a suitable relative homotopy class. For this step, we will closely follow the arguments of Dey \cite{Dey23}. 
Choose a $p$-sweepout $\overline \Phi \colon \overline Y\to \B(M,\Z_2)$ with no concentration of mass for which 
\[
\sup_{\bar y\in \overline Y} E_0(\overline \Phi(\bar y)) < \alpha_{p+1}. 
\]
Now let $Y$ be the double cover of $\overline Y$ corresponding to the cohomology class $\overline \Phi^*\lambda$, where $\lambda$ is the non-zero element in $H^1(\B(M,\Z_2),\Z_2)$. Let $y\mapsto -y$ denote the deck transformation of $Y$. Let $\Phi\colon Y\to \C(M)$ be the lift of $\overline \Phi$.  Let $X=SY$ be the suspension of $Y$. Note that $Y$ is naturally identified with the subspace $Y\times \{0\}$ in $X$. Moreover, $X$ carries a free $\Z_2$ action $x \mapsto -x$ extending the deck transformation on $Y$. Let $Z = Y\times [-2^{-1},2^{-1}]\subset X$. Also let $\overline X$ be the quotient of $X$ by the $\Z_2$ action, and let $\overline Z\subset \overline X$ be the quotient of $Z$ by the $\Z_2$ action. 

Next, as in \cite[Section 4, Part 2]{Dey23}, choose a map $\Lambda \colon [0,1] \to (\C(M),\mathcal F)$ with no concentration of mass satisfying $\Lambda(0) = M$ and $\Lambda(1) = \emptyset$. Then define $\Psi'\colon X\to (\C(M),\mathcal F)$ by 
\[
\Psi'(y,t) = \begin{cases}
    \Phi(y), &\text{if } \vert t\vert \le 2^{-1}\\
    \Phi(y) \cap \Lambda(2t-1)  &\text{if } t\ge 2^{-1},\\
    M\setminus (\Phi(-y) \cap \Lambda(-1-2t)), &\text{if } t\le -2^{-1}. 
\end{cases}
\]
The map $\Psi'$ is flat continuous by \cite[Claim 4.2]{Dey23}. Moreover, $\Psi'$ satisfies 
\[
\Psi'(x) = M\setminus \Psi'(-x)
\]
by construction, and therefore $\Psi'$ induces a map $\overline \Psi'\colon \overline X \to \mathcal (B(M,\Z_2),\mathcal F)$. The map $\overline \Psi'$ has no concentration of mass by \cite[Claim 4.3]{Dey23}. 
Hence, by applying discretization followed by interpolation \cite[Section 1.3]{zhou2020multiplicity}, we can replace $\overline \Psi'$ by an $\mathbf F$-continuous map $\overline \Psi\colon \overline X\to (\B(M,\Z_2),\mathbf F)$ with the property that 
\begin{equation}
\label{eq:z-upper-bound}
\sup_{\bar z\in \overline Z} E_0(\overline \Psi(\bar z)) < \alpha_{p+1}. 
\end{equation}
The map $\overline \Psi$ is a $(p+1)$-sweepout by \cite[Section 4, Part 3]{Dey23}.  Finally let $\Psi\colon X\to (\C(M),\mathbf F)$ be the lift of $\overline \Psi$. Let $\Pi$ be the $(X,Z)$ relative homotopy class of the map $\Psi$. 

\vspace{0.5em}

{\bf Step 2:}
The next step is to apply min-max theory in the homotopy class $\Pi$. Fix a smooth Morse function $h\colon M\to \R$ satisfying property (T). For each $\delta > 0$, define $F_{\eps,\delta}\colon \C(M)\to \R$ by 
\[
F_{\eps,\delta}(\Omega) = \M(\bd \Omega) - \delta \int_\Omega h - c\cdot f_\eps(\vol(\Omega) - \hv). 
\]
Also define $\widetilde E_0\colon \C(M)\to \R$ by 
$
\widetilde E_0(\Omega) = \M(\bd \Omega) - c \vert \vol(\Omega) - \hv\vert. 
$

\begin{prop}
\label{prop:quotient}
For any sequence $\{\Psi_i\colon X\to (\C(M),\mathbf F)\}_{i\in \N}\in \Pi$, we have 
\[
\limsup_{i\to \infty}\left[\sup_{x\in X} \widetilde E_0(\Psi_i(x)) \right] \ge \alpha_{p+1}.
\]
\end{prop}

\begin{proof}
The proof is essentially the same as \cite[Section 4, Part 4]{Dey23}. 
Recall that 
\[
\limsup_{i\to \infty}\left[\sup_{z\in Z} \mathbf F(\Psi_i(z),\Psi(z))\right] = 0. 
\]
Moreover, by definition there are $\mathbf F$-continuous homotopies $H_i\colon Z\times [0,1]\to (\C(M),\mathbf F)$ satisfying $H_i(z,0) = \Psi(z)$ and $H_i(z,1) = \Psi_i(z)$ and 
\begin{equation}
\label{eq:z-homotopy}
\limsup_{i\to \infty} \left[\sup_{(z,t)\in Z\times [0,1]} \mathbf F(H_i(z,t),\Psi(z))\right] = 0. 
\end{equation}

Let $C_+$ be the subset of $X$ consisting of all pairs $(y,t)$ with $t \ge 0$. Now define $\Xi_i\colon C_+\to \C(M)$ by 
\[
\Xi_i(y,t) = \begin{cases}
    \Psi_i(y,t), &\text{if } 2^{-1}\le t \le 1\\
    H_i((y,t),2t), &\text{if } 0 \le t \le 2^{-1}. 
\end{cases}
\]
Observe that $\Xi_i$ is $\mathbf F$-continuous and that $\Xi_i(y,0) = \Psi(y,0)$ for all $y\in Y$. Since $\Psi(y,0) = M\setminus \Psi(-y,0)$ for all $y\in Y$, it follows that $\Xi_i$ induces an $\mathbf F$-continuous map $\overline \Xi_i\colon \overline X\to 
(B(M,\Z_2),\mathbf F)$. 
Note that $\overline \Xi_i$ is homotopic to $\overline \Psi$ and therefore that $\overline \Xi_i$ is a $(p+1)$-sweepout. Moreover, $\overline \Xi_i$ is $\mathbf F$-continuous and hence has no concentration of mass. Therefore, it follows that 
\[
\sup_{\bar x \in \overline X}  E_0(\overline \Xi_i(\bar x)) \ge \alpha_{p+1}. 
\]
Next, observe that (\ref{eq:z-upper-bound}) and (\ref{eq:z-homotopy}) imply that 
\[
\sup_{(z,t)\in Z\times [0,1]} \widetilde E_0(H_i(z,t)) < \alpha_{p+1}
\]
provided $i$ is sufficiently large. 
Together with the previous inequality, this implies that 
\[
\sup_{x\in X} \widetilde E_0(\Psi_i(x)) \ge \alpha_{p+1}
\]
for all sufficiently large $i$. This proves the proposition. 
\end{proof}
 
Consider any sequence $\{\Psi_i\colon X\to (\C(M),\mathbf F)\}\in \Pi$. It follows from Proposition \ref{prop:quotient} that 
\[
L^{F_{\eps,\delta}}(\{\Psi_i\}) \ge \alpha_{p+1} - \delta \vol(M) \cdot \sup_M \vert h\vert - c \cdot \sup_{t\in \R} \big\vert f_\eps(t) - \vert t\vert \big\vert.
\]
Likewise, for all $z\in Z$ we have 
\[
F_{\eps,\delta}(\Psi(z)) < \sup_{\bar z\in \overline Z} E_0(\overline \Psi(\bar z)) + \delta \vol(M)\cdot \sup_M \vert h\vert + c \cdot \sup_{t\in \R} \big \vert f_\eps(t) - \vert t\vert\big \vert.  
\]
Since $f_\eps$ converges uniformly to the absolute value function as $\eps \to 0$, it therefore follows that 
\[
L^{F_{\eps,\delta}}(\Pi) > \sup_{z\in Z} F_{\eps,\delta}(\Psi(z))
\]
provided $\eps$ and $\delta$ are small enough. 

Consequently, if $\eps$ and $\delta$ are sufficiently small, we can apply min-max theory for $F_{\eps,\delta}$ in the homotopy class $\Pi$; see Theorem 1.10 in \cite{mazurowski2024infinitely}. It follows that there exists $(V_{\eps,\delta},\Omega_{\eps,\delta})\in \vc(M)$ which is stationary for $F_{\eps,\delta}$ and satisfies $F_{\eps,\delta}(V_{\eps,\delta},\Omega_{\eps,\delta})= L^{F_{\eps,\delta}}(\Pi)$. Moreover, there is a smooth, almost-embedded hypersurface $\Sigma_{\eps,\delta}$ with $\bd \Omega_{\eps,\delta} = \Sigma_{\eps,\delta}$ and $V_{\eps,\delta} = \vert \Sigma_{\eps,\delta}\vert$. The mean curvature of $\Sigma_{\eps,\delta}$ is equal to $\delta h|_{\Sigma_{\epsilon, \delta}} + h_{\eps,\delta}$ where $h_{\eps,\delta} = -c \cdot f_\eps'(\vol(\Omega_{\eps,\delta})-\hv)$ is a constant satisfying $\vert h_{\eps,\delta}\vert \le c$. Finally, $(V_{\eps,\delta},\Omega_{\eps,\delta})$ satisfies property (R)\footnote{Property (R) asserts that there is a uniform number $N$ such that $(V_{\eps,\delta},\Omega_{\eps,\delta})$ must be almost minimizing for $F_{\eps,\delta}$ in at least one annulus in any collection of $N$ concentric annuli; c.f. \cite[Proposition 3.15]{mazurowski2024infinitely}. This property is needed to apply the compactness theory developed in \cite[Section 5]{mazurowski2024infinitely}}. 
for $F_{\eps,\delta}$. 

\vspace{0.5em}

{\bf Step 3:} In the final step, we will pass to the limit $\delta \to 0$ and then the limit $\eps \to 0$ to construct the desired hypersurfaces.
First, fix a small $\eps > 0$ and consider a sequence $\delta_j \to 0$. For notational convenience, let $V_{\eps,j} = V_{\eps,\delta_j}$ and let $\Omega_{\eps,j} = \Omega_{\eps,\delta_j}$. Passing to a limit along a subsequence, we can obtain convergence $V_{\eps_j}\to V_\eps$ as varifolds and $\bd \Omega_{\eps_j} \to \Omega_\eps$ as Caccioppoli sets. The pair $(V_\eps,\bd \Omega_\eps)$ is stationary for $E_\eps$. Since the first variation of $(V_{\eps,j},\Omega_{\eps,j})$ is uniformly bounded, we can apply the compactness theory in \cite[Section 5]{mazurowski2024infinitely} to deduce that one of the following alternatives holds:
\begin{itemize}
    \item[(i)] There is a smooth, closed, almost-embedded hypersurface $\Lambda_\eps$ with non-zero constant mean curvature such that $\bd \Omega_\eps = \Lambda_\eps$ and $V_\eps = \vert \Lambda_\eps\vert$. 
    \item[(ii)] The varifold $V_\eps$ is induced by a collection of disjoint, closed minimal hypersurfaces with integer multiplicities. Moreover, some subcollection of the minimal hypersurfaces bounds the region $\Omega_\eps$. 
\end{itemize}
Finally, note that the fact that $(V_\eps, \Omega_\eps)$ is stationary for $E_\eps$ implies that the mean curvature of $\supp \|V_\eps\|$ satisfies $\vert H_\eps\vert = c \vert f_\eps'(\vol(\Omega_\eps)-\hv)\vert$. 

Next, we aim to take a limit of $(V_\eps,\Omega_\eps)$ as $\eps \to 0$. Select a sequence $\eps_k \to 0$. For notational convenience, let $V_k = V_{\eps_k}$ and let $\Omega_k = \Omega_{\eps_k}$.  Passing to a limit along a subsequence, we can obtain convergence $V_k\to V$ as varifolds and $\Omega_k \to \Omega$ as Caccioppoli sets. Let $H_k$ be the mean curvature of $\supp \|V_k\|$. Passinng to a futher subsequence if necessary, we may assume that $H_k\to H$ as $k\to \infty$. Note that $\vert H\vert \le c$. Since $H_k$ is uniformly bounded, we can again appeal to the compactness theory in \cite[Section 5]{mazurowski2024infinitely} to deduce that one of the following two alternatives holds:
\begin{itemize}
    \item[(i)] There is a smooth, closed, almost-embedded hypersurface $\Lambda$ with non-zero constant mean curvature such that $\bd \Omega = \Lambda$ and $V = \vert \Lambda\vert$. 
    \item[(ii)] The varifold $V$ is induced by a collection of disjoint, closed minimal hypersurfaces with integer multiplicities. Moreover, some subcollection of the minimal hypersurfaces bounds the region $\Omega$. 
\end{itemize}
Moreover, the first alternative holds when $H\neq 0$, and in this case $\Lambda$ has constant mean curvature $H$. The second alternative holds when $H = 0$. 

We claim that in case (ii) one has $\vol(\Omega) = \hv$. Indeed, this follows from the relation $\vert H_k\vert = c \vert f_{\eps_k}'(\vol(\Omega_k)-\hv)\vert$ and the fact that $H_k\to 0$ when alternative (ii) holds. Since the metric $g$ is assumed to be generic, no collection of minimal hypersurfaces in $M$ encloses half the volume of $M$. Therefore,  case (ii) cannot occur. 

Next, let us analyze case (i) in more detail. Observe that if $\vol(\Omega) \neq \hv$, then the relation $\vert H_k\vert = c \vert f_{\eps_k}'(\vol(\Omega_k)-\hv)\vert$ implies that $\vert H\vert = c$. Therefore, if $\vol(\Omega)\neq \hv$, the hypersurface $\Lambda$ has constant mean curvature $c$. The other possibility is that $\vol(\Omega) = \hv$, in which case $\Lambda$ is a hypersurface with constant mean curvature less than $c$ which encloses half the volume of $M$. 
Finally, note that 
\[
E_k(V_k,\bd \Omega_k) \ge \alpha_{p+1} - c \cdot \sup_{t\in \R} \big \vert f_{\eps_k}(t) - \vert t\vert\big \vert 
\]
and consequently 
\[
\|V_k\|(M) \ge \alpha_{p+1} - c\cdot \sup_{t\in \R} \big \vert f_{\eps_k}(t)-\vert t\vert\big \vert. 
\]
Letting $k\to \infty$, we obtain $\|V\|(M)\ge \alpha_{p+1}$. 

To summarize, we have now shown that for every $p\in \N$ for which $\alpha_p < \alpha_{p+1}$, there exists a closed, almost-embedded hypersurface $\Lambda$ in $M$ with $\area(\Lambda) \ge \alpha_{p+1}$ such that either 
\begin{itemize}
    \item[(i)] $\Lambda$ has constant mean curvature $c$, or 
    \item[(ii)] $\Lambda$ has constant mean curvature less than $c$ and encloses half the volume of $M$. 
\end{itemize}
This completes the proof of Theorem \ref{theorem:gap}.
\end{proof}

\bibliographystyle{plain}
\bibliography{bibliography}

\end{document}